\newcommand{\F}{\mathbb{F}}
\newtheorem{theorem}{Theorem}[section]
\newtheorem{proposition}{Proposition}[section]
\newtheorem{lemma}{Lemma}[section]
\newtheorem{corollary}{Corollary}[section]
\theoremstyle{definition}
\theoremstyle{remark}
\newtheorem*{remark}{Remark}
\begin{document}
\title{On Quadratic Curves over Finite Fields}
\author{Vagn Lundsgaard Hansen}
\author{Andreas Aabrandt}
\address{Technical University of Denmark}
\maketitle

\begin{abstract}
The geometry of algebraic curves over finite fields is a rich area of research. In \cite{circle}, the authors investigated a particular aspect of the geometry over finite fields of the classical unit circle, namely how the number of solutions of the circle equation depends on the characteristic $p$ and the degree  $n\geq 1$ of the finite field $\F_{p^n}$. In this paper, we make a similar study of the geometry over finite fields of the quadratic curves defined by the quadratic equations in two variables for the classical conic sections. In particular the quadratic equation with mixed term is interesting, and our results display a rich variety of possibilities for the number of solutions to this equation over a finite field.
\end{abstract}

\subjclass{{\small \textbf{Subject class:} 11G20, 11D45, 11D09, 11A07, 14G15}}

\keywords{{\small \textbf{Keywords:} Diophantine geometry, curves over finite fields, counting solutions to quadratic equations}}

\section{Introduction}

The geometry of algebraic curves over finite fields is a fascinating subject which emerged as an important research area in works of Abel and Galois in the 1820s and gained momentum in an inspired survey paper on the number of solutions of equations in finite fields published 1949 by Andr\'e Weil \cite{weil49}. 

In \cite{circle}, we initiated a study of how the number of solutions over a finite field $\F_{p^n}$ of the polynomial equation for an algebraic curve depends on the characteristic $p$ and the degree  $n\geq 1$ of the field. Our study of the circle equation
$$x^2+y^2=1,$$
revealed that surprising phenomena can happen. 

In this paper, we make a similar study of the geometry over the finite fields 
$\F_{p^n}$ of the quadratic curves defined by the quadratic equations in two variables for the classical conic sections, cf. (\cite{Hansen98}, Section 2.6). For simplicity we assume that all constants in the equations equals $1$.

We have already examined the {\em elliptic equation} (circle equation) in \cite{circle}. Here we shall examine the following types of quadratic equations:
\vspace{2mm}

\quad The {\em hyperbolic equation} $$x^2-y^2=1.$$

\quad The {\em parabolic equation} $$y=x^2.$$

\quad The {\em quadratic equation with mixed term} $$x^2+xy+y^2=1.$$

In particular the quadratic equation with mixed term is interesting. Our results display a rich variety of possibilities for the number of solutions to the equation depending on the characteristic $p$ and the degree $n\geq 1$ of the field $\F_{p^n}$. The prime $p=2$ is especially intriguing.

\section{Solutions to the hyperbolic equation}

\begin{theorem}
Over the finite field $\;\F_{2^n}$ corresponding to the prime $p=2$ and the integer $n\geq 1$, the hyperbolic equation
$$x^2-y^2 = 1$$
has exactly $2^n$ solutions of ordered pairs $(x,y)$ of elements in $\;\F_{2^n}$.
\end{theorem}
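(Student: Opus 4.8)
The plan is to exploit the collapse of the quadratic form that is special to characteristic $2$. Since $-1 = 1$ in $\F_{2^n}$, the left-hand side satisfies $x^2 - y^2 = x^2 + y^2$, and because the cross term $2xy$ vanishes we have the perfect-square identity
$$x^2 - y^2 = x^2 + y^2 = (x+y)^2.$$
Thus the hyperbolic equation is equivalent, over $\F_{2^n}$, to the single equation $(x+y)^2 = 1$.

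Next I would use that the Frobenius map $z \mapsto z^2$ is a field endomorphism of $\F_{2^n}$ and in particular injective, so $z^2 = 1 = 1^2$ forces $z = 1$. (Equivalently, $z^2 - 1 = (z-1)^2$ in characteristic $2$, whose only root is $z = 1$.) Applying this with $z = x+y$ reduces the problem to the \emph{linear} equation
$$x + y = 1.$$

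Finally I would count the solutions of this linear relation directly: for each of the $2^n$ choices of $x \in \F_{2^n}$ there is exactly one $y = 1 + x$ making $(x,y)$ a solution, and conversely every solution of $x+y=1$ arises this way. Hence the solution set is in bijection with $\F_{2^n}$ and has cardinality exactly $2^n$, as claimed.

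I do not anticipate a genuine obstacle here; the entire content is the characteristic-$2$ identity $x^2 - y^2 = (x+y)^2$ together with the injectivity of Frobenius, and the only point requiring a word of care is the observation that $z^2 = 1$ has the \emph{unique} solution $z=1$ (rather than the two solutions $\pm 1$ one would expect in odd characteristic), which is precisely why the count is $2^n$ and not smaller.
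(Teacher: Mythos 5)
Your proof is correct. The paper's own proof consists of the single observation that in characteristic $2$ the hyperbolic equation coincides with the circle equation $x^2+y^2=1$, after which it cites Corollary 2.1 of the companion paper \cite{circle} for the count $2^n$. You make the same opening move ($-1=1$ in $\F_{2^n}$), but then carry the argument through yourself: the vanishing of the cross term gives $x^2+y^2=(x+y)^2$, injectivity of the Frobenius $z\mapsto z^2$ reduces $(x+y)^2=1$ to the linear equation $x+y=1$, and that has exactly $2^n$ solutions. Your version is therefore self-contained where the paper's is not, and it correctly isolates the one delicate point (that $z^2=1$ has the unique root $z=1$ in characteristic $2$, rather than two roots $\pm 1$), which is exactly why the answer is $2^n$ here versus $p^n-1$ for odd $p$ in Theorem 2.2. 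The only thing the paper's route buys is brevity and consistency with its general strategy of leaning on \cite{circle}; mathematically your argument is what that cited corollary amounts to.
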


\begin{proof}
For $p=2$, the equation $x^2-y^2 = 1$ coincides with the circle equation $x^2+y^2 = 1$. Hence the result follows from (\cite{circle}, Corollary 2.1).
\end{proof}

\begin{theorem}\label{theorem:hypeq}
Over the finite field $\F_{p^n}$ corresponding to an odd prime $p\geq 3$ and the integer $n\geq 1$, the hyperbolic equation 
$$x^2-y^2 = 1$$
has exactly $p^n-1$ solutions of ordered pairs $(x,y)$ of elements in $\;\F_{p^n}$.
\end{theorem}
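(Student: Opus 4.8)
The plan is to exploit the factorization $x^2 - y^2 = (x-y)(x+y)$, which is valid in any commutative ring, and then reduce the counting problem to the elementary count of solutions of $uv = 1$ in $\F_{p^n}$. The crucial point where the hypothesis $p \geq 3$ enters is that over a field of odd characteristic the change of variables sending $(x,y)$ to the pair of factors is a bijection of the affine plane, so it preserves the number of solutions.

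Concretely, I would introduce the linear substitution $u = x+y$ and $v = x-y$. Since $p$ is odd, the element $2$ is invertible in $\F_{p^n}$, so this map has the explicit inverse $x = 2^{-1}(u+v)$ and $y = 2^{-1}(u-v)$; hence $(x,y) \mapsto (u,v)$ is a bijection from $\F_{p^n}\times\F_{p^n}$ onto itself. Under this substitution the hyperbolic equation becomes simply
$$uv = 1.$$
Because the substitution is a bijection, the number of solutions $(x,y)$ of $x^2 - y^2 = 1$ equals the number of solutions $(u,v)$ of $uv = 1$.

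It then remains to count the solutions of $uv = 1$. Here the observation is that $uv = 1$ forces $u \neq 0$, and for each nonzero $u$ there is precisely one choice $v = u^{-1}$; conversely every nonzero $u$ yields a valid solution. Thus the solutions are in one-to-one correspondence with the multiplicative group $\F_{p^n}^{\times}$, which has order $p^n - 1$. Tracing back through the bijection gives exactly $p^n - 1$ ordered pairs $(x,y)$, as claimed.

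I do not expect a genuine obstacle in this argument; the only subtlety is the role of the odd-characteristic assumption, which is exactly what guarantees invertibility of $2$ and hence that the diagonalizing substitution is a bijection rather than a collapsing map. This is precisely why the case $p = 2$ had to be treated separately in the preceding theorem, where $x-y$ and $x+y$ coincide and the factorization degenerates.
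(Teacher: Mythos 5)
Your proposal is correct and follows essentially the same route as the paper: both exploit the factorization $(x+y)(x-y)=1$ and the invertibility of $2$ to parametrize solutions by the nonzero element $u=x+y$, yielding $|\F_{p^n}^{*}|=p^n-1$ solutions. The only cosmetic difference is that you phrase the substitution as a bijection of the affine plane reducing to $uv=1$, whereas the paper writes the inverse map $x=2^{-1}(u+u^{-1})$, $y=2^{-1}(u-u^{-1})$ explicitly.
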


\begin{proof}
The equation $x^2-y^2 = 1$ is equivalent to the equation
$$(x+y)(x-y) = 1.$$

From this follows that the ordered pair $(x,y)$ of elements $x,y\in \F_{p^n}$ is a solution to $x^2-y^2 = 1$ over 
$\F_{p^n}$ if and only if
$$x+y\neq 0 \quad \text{and} \quad x-y = (x+y)^{-1} .$$

Now put $x+y=u$ for $u\neq 0$. Then the ordered pair $(x,y)$ is a solution to $x^2-y^2 = 1$ over $\F_{p^n}$ 
if and only if $x-y=u^{-1}$.

Let $\F_{p^n}^*$ denote the multiplicative group of non-zero elements in $\F_{p^n}$.

From the above we can then conclude that the ordered pair $(x,y)$ is a solution to $x^2-y^2 = 1$ over $\F_{p^n}$ if and only if
$$x=2^{-1}(u+u^{-1}) \quad \text{and} \quad y=2^{-1}(u-u^{-1})$$
for an arbitrary element $u\in\F_{p^n}^*$. 

Since the order of $\F_{p^n}^*$ is $p^n-1$, the theorem follows. 
\end{proof}

In Table 1, we display for each of the primes $p=2,3,5,7,11$, the set of all ordered pairs $(x,y)$ of elements in the prime field $\F_p$ that constitutes the set of solutions and the number $N_p$ of solutions to the hyperbolic equation over $\F_p$.
\bgroup
\def\arraystretch{1.8}%
\begin{center}
\begin{table}
\caption{Solutions to $x^2-y^2 = 1$ for $p=2,3,5,7,11$.\label{table:Table1}}
    \begin{tabular}{ | l | p{8cm}| l |}
    \hline
    $p$ & Solutions to $x^2-y^2=1$ & $N_p$ \\ \hline
	$2$ & $(1,0),(0,1)$ & 2
	\\ \hline
    $3$ & $(1,0),(2,0)$ & 2
    \\ \hline
    $5$ & $(1,0),(0,2),(0,3),(4,0)$ & 4
    \\ \hline
    $7$ & $(1,0),(3,6),(4,6),(3,1),(4,1),(6,0)$ & 6
    \\ \hline
    $11$ & \parbox{5cm}{$(1,0),(4,9),(9,5)$,$(9,6),(7,9),$\\$(4,2),(2,5)$,$(2,6),(7,2),(10,0)$} & 10
    \\ \hline
   	
    \end{tabular}
\end{table}
\end{center}
\egroup

\section{Solutions to the parabolic equation}

\begin{proposition}
Over the finite field $\;\F_{p^n}$ corresponding to the prime $p\geq 2$ and the integer 
$n\geq 1$, the parabolic equation 
$$y=x^2$$
has exactly $p^n$ solutions of ordered pairs $(x,y)$ of elements in $\;\F_{p^n}$.
\end{proposition}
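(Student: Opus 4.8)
The plan is to exploit the fact that the parabolic equation $y = x^2$ already expresses $y$ explicitly in terms of $x$, so that the solution set is nothing but the graph of the squaring map on $\F_{p^n}$. First I would introduce the map
$$\Phi \colon \F_{p^n} \to \F_{p^n} \times \F_{p^n}, \qquad \Phi(x) = (x, x^2),$$
and verify that its image coincides with the set of solutions of $y = x^2$. This is a two-line check: every pair of the form $(x, x^2)$ visibly satisfies the equation, and conversely, if an ordered pair $(x,y)$ is a solution, then $y = x^2$, so the pair equals $\Phi(x)$ and lies in the image.

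Next I would establish that $\Phi$ is injective, hence a bijection onto the solution set. Injectivity is immediate, since the first coordinate of $\Phi(x)$ recovers $x$; thus $\Phi(x) = \Phi(x')$ forces $x = x'$. Counting then finishes the argument: the solution set is in bijection with $\F_{p^n}$, so its cardinality is exactly $|\F_{p^n}| = p^n$, for every prime $p \geq 2$ and every integer $n \geq 1$.

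There is no real obstacle to overcome here, and that is itself worth remarking on: in contrast to the hyperbolic equation of the previous section, where one must factor and invert and thereby excludes the value $u = 0$ (costing one solution), the parabolic case involves no solving of a quadratic in either variable, since $y$ is freely determined by $x$. For the same reason the characteristic $p = 2$ needs no special treatment, even though the squaring map $x \mapsto x^2$ is a bijection in that case and merely two-to-one onto the nonzero squares when $p$ is odd — this asymmetry in the behaviour of squaring is irrelevant to the count precisely because we parametrize by $x$ rather than by $y$. I would state this explicitly to contrast with the more delicate analysis required for the equation with mixed term, where $p = 2$ does become intriguing.
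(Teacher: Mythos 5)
Your argument is correct and is essentially the paper's own proof: both parametrize the solution set by the first coordinate $x$, observing that $y$ is uniquely determined as $x^2$, so the solutions are in bijection with $\F_{p^n}$ and number exactly $p^n$. The map $\Phi$ and the closing remarks on characteristic $2$ are just a more explicit dressing of the same one-line idea.
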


\begin{proof}
This is trivially true since for each $x\in \F_{p^n}$ there is a unique element 
$y\in \F_{p^n}$ such that $y=x^2$. Since the order of $\;\F_{p^n}$ is $p^n$, there are therefore exactly $p^n$ ordered pairs $(x,y)$ of elements in $\;\F_{p^n}$ solving the parabolic equation.
\end{proof}

\section{The quadratic equation with mixed term for odd primes}

In this section we examine the quadratic equation
$$x^2+xy+y^2=1$$
over the finite fields $\;\F_{p^n}$ of characteristic $p\geq 3$ and dimension 
$n\geq 1$.

For any prime $p\geq 3$, the equation $x^2+xy+y^2=1$ can be rewritten as follows by completion of the square
$$(x+2^{-1}y)^2+(1-(2^{-1})^2)y^2=1.$$

Note that the inverse to $2\in \F_{p^n}$ for $p\geq 3$ is given by 
$2^{-1}=(p+1)/2$.
Hence we can rewrite the equation $x^2+xy+y^2=1$ as
$$\big(x+\frac{p+1}{2}y\big)^2+\big(1-\big(\frac{p+1}{2}\big)^2\big)y^2=1,$$
which can be simplified to
$$\big(x+\frac{p+1}{2}\;y\big)^2+\frac{(3+p)(1-p)}{4}\;y^2=1 .$$

Finally we can then rewrite the equation $x^2+xy+y^2=1$ as 
$$z^2+\frac{(3+p)(1-p)}{4}\;y^2=1 \quad \text{with} \quad z=x+\frac{p+1}{2}\;y.$$ 

Define the element $a(p)\in \F_p$  for $p\geq 3$ by
$$a(p)=\frac{(3+p)(1-p)}{4}.$$

The rewriting of the equation $x^2+xy+y^2=1$ can then be formulated as
$$z^2+a(p)y^2=1 \quad \text{with} \quad z=x+\frac{p+1}{2}\;y.$$ 

The element $a(p)\in \F_p$ for $p\geq 3$ plays a prominent role in determining the structure of the set of solutions to the equation $x^2+xy+y^2=1$ over the finite fields $\;\F_{p^n}$ for $n\geq 1$. 

\begin{lemma}\label{a(p)lemma}
For $p\geq 3$ and $n\geq 1$, the element $a(p)\in \F_p$ has the properties.
\begin{enumerate}
\item $a(p)=0$ in $\F_p$ if and only if $p=3$.
\vspace{2mm}
\item $a(p)=-1$ in $\F_p$ if and only if $p=7$.
\item $a(p)$ is a square in $\F_{p^n}^*$ if and only if $a(p)^{\frac{p^n-1}{2}}=1$.
\end{enumerate}
\end{lemma}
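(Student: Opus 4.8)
The plan is to treat the three assertions separately, since the first two reduce to simple conditions on the characteristic while the third is an instance of Euler's criterion. First I would simplify $a(p)$: although it is recorded in the factored form $(3+p)(1-p)/4$, its origin is $a(p)=1-(2^{-1})^2$, and since $2^{-1}=(p+1)/2$ squares to $4^{-1}$ I would rewrite $a(p)=1-4^{-1}=3\cdot 4^{-1}$ in $\F_p$; equivalently, reducing the factored form modulo $p$ gives $(3+p)(1-p)\equiv 3$, hence again $a(p)=3\cdot 4^{-1}$. Because $p\geq 3$ the element $4$ is invertible, so $a(p)$ is a nonzero scalar multiple of $3$, and both equivalences fall out at once: $a(p)=0$ holds precisely when $3=0$ in $\F_p$, i.e. $p=3$; and $a(p)=-1$ holds precisely when $3\cdot 4^{-1}=-1$, i.e. $3=-4$, i.e. $7=0$ in $\F_p$, i.e. $p=7$. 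I would also verify these two values by substituting $p=3$ and $p=7$ directly into $(3+p)(1-p)/4$ as a sanity check.

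For the third assertion I would invoke the classical Euler criterion in the field $\F_{p^n}$. Note first that the hypothesis keeps us away from $p=3$, so by part (1) the element $a(p)$ is nonzero and genuinely lies in $\F_{p^n}^*$. The group $\F_{p^n}^*$ is cyclic of even order $p^n-1$ (even because $p$ is odd), so fixing a generator $g$ and writing $a(p)=g^k$, the element $a(p)$ is a square exactly when $k$ is even. On the other hand $a(p)^{(p^n-1)/2}=g^{k(p^n-1)/2}$ equals $1$ exactly when the order $p^n-1$ divides $k(p^n-1)/2$, that is, exactly when $k$ is even. Matching the two conditions yields the stated equivalence.

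None of the steps is genuinely deep; the arithmetic in parts (1) and (2) is routine, and the only place asking for care is the generator-and-divisibility bookkeeping in part (3). The hard part, such as it is, will be stating Euler's criterion cleanly: a tidy alternative I might prefer is to observe that $a(p)^{(p^n-1)/2}$ is a square root of $a(p)^{p^n-1}=1$ and hence equals $\pm1$, then argue that the value $1$ is attained exactly on the squares, which sidesteps explicit mention of $g$ while resting on the same cyclic structure.
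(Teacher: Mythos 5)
Your proof is correct and, for parts (1) and (2), essentially identical to the paper's: both arguments reduce $(3+p)(1-p)$ modulo $p$ to $3$ and read off the conditions $3\equiv 0$ and $3\equiv -4 \pmod p$. The only real difference is in part (3), where the paper simply cites the Generalized Euler's Criterion from an external reference, while you prove it from scratch via the cyclic structure of $\F_{p^n}^*$ (writing $a(p)=g^k$ and matching ``$k$ even'' on both sides). Your version is self-contained and the bookkeeping is sound, since $p^n-1$ is even for odd $p$; what the paper's citation buys is brevity. One small inaccuracy: you say the hypothesis ``keeps us away from $p=3$,'' but the lemma is stated for all $p\geq 3$. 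This costs you nothing, because for $p=3$ we have $a(p)=0$, which is neither a square in $\F_{p^n}^*$ nor satisfies $a(p)^{(p^n-1)/2}=1$, so the equivalence in (3) holds vacuously; it would be cleaner to note this explicitly rather than to claim the case is excluded.
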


\begin{proof}
(1) follows by observing that $a(p)=0$ if and only if 
$$(3+p)(1-p)\equiv 0 \pmod p ,$$
or equivalently, if and only if
$$3\equiv 0 \pmod p ,$$
which happens only for $p=3$.

(2) follows by observing that $a(p)=-1$ if and only if 
$$(3+p)(1-p)\equiv -4 \pmod p ,$$
or equivalently, if and only if
$$3\equiv -4 \pmod p ,$$
which happens only for $p=7$.

(3) follows from (\cite{powers}, Theorem 2).
\end{proof}
 
Using the results from Lemma \ref{a(p)lemma} about the element $a(p)\in \F_p$, we can now determine the number of solutions to the quadratic equation with mixed term for the primes $p=3$ and $p=7$ and all $n\geq 1$.
 
\begin{theorem}\label{theorem_three}
For $p=3$ and an arbitrary integer $n\geq 1$, the equation 
$$x^2+xy+y^2=1$$ 
has exactly $2\cdot 3^n$ solutions of ordered pairs $(x,y)$ of elements in 
$\;\F_{3^n}$.
\end{theorem}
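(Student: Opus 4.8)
The plan is to exploit the completion-of-square rewriting already established in this section, specializing it to $p=3$. Setting $z=x+\frac{p+1}{2}y$, the equation $x^2+xy+y^2=1$ is equivalent to $z^2+a(p)y^2=1$. For $p=3$ we have $\frac{p+1}{2}=2$, so $z=x+2y$, and by part (1) of Lemma \ref{a(p)lemma} the coefficient $a(3)=0$. Hence over $\F_{3^n}$ the equation collapses to the single-variable condition $z^2=1$, with $y$ left entirely unconstrained.

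First I would argue that the substitution $(x,y)\mapsto(z,y)=(x+2y,y)$ is a bijection of $\F_{3^n}^2$ onto itself: it is $\F_{3^n}$-linear with inverse $(z,y)\mapsto(z-2y,y)$, so it sets up a one-to-one correspondence between solutions $(x,y)$ of the original equation and pairs $(z,y)$ satisfying $z^2=1$. Counting the original solutions therefore reduces to counting such pairs.

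Next I would count the pairs $(z,y)$ with $z^2=1$. Factoring $z^2-1=(z-1)(z+1)$ in the field $\F_{3^n}$, the equation $z^2=1$ holds exactly when $z=1$ or $z=-1$. Because the characteristic is $3$ (and in particular not $2$), the elements $1$ and $-1$ are distinct, so there are precisely two admissible values of $z$. The variable $y$ ranges freely over all $3^n$ elements of $\F_{3^n}$, so each choice of $z$ contributes $3^n$ pairs, giving $2\cdot 3^n$ pairs in total and hence $2\cdot 3^n$ solutions to $x^2+xy+y^2=1$.

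There is no serious obstacle here; the argument is essentially immediate once the vanishing of $a(3)$ is invoked. The only points demanding a moment's care are confirming that the shear substitution is genuinely a bijection (so the solution count is preserved) and noting that $1\neq -1$ in characteristic $3$, which is exactly what produces the factor $2$ rather than $1$. It is worth remarking that this clean degeneration is special to $p=3$: it is precisely the prime for which the quadratic form $z^2+a(p)y^2$ loses its $y^2$-term, which is why the count takes such a simple closed form in this case.
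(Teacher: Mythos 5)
Your argument is correct and follows essentially the same route as the paper's own proof: specialize the completion-of-square reduction to $p=3$, use $a(3)=0$ to collapse the equation to $z^2=1$ with $z=x+2y$, and count $2$ choices of $z$ times $3^n$ free choices of $y$. Your added remarks that the shear is a bijection and that $1\neq -1$ in characteristic $3$ are welcome explicit justifications of steps the paper treats implicitly, but they do not change the argument.
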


\begin{proof}
For $p=3$, the coefficient $a(3)=0$, and hence finding solutions to the equation $x^2+xy+y^2=1$ reduces to finding solutions to the equations
$$z^2=1 \quad \text{with} \quad z=x+2y.$$

Since $z=\pm\;1$, we only have to find the number of solutions to the two equations 
$x+2y=\pm\; 1$ in $\;\F_{3^n}$. 

For any choice of $y\in\F_{3^n}$, there exists for each of the two equations, a unique $x\in\F_{3^n}$ such that the equation is satisfied. The field $\F_{3^n}$ has $3^n$ elements, and hence there are exactly $2\cdot 3^n$ solutions of ordered pairs $(x,y)$ of elements in the finite field $\;\F_{3^n}$ satisfying $x^2+xy+y^2=1$.
\end{proof}

\begin{theorem}
For $p=7$ and an arbitrary integer $n\geq 1$, the equation 
$$x^2+xy+y^2=1$$ 
has exactly $7^n-1$ solutions of ordered pairs $(x,y)$ of elements in $\;\F_{7^n}$.
\end{theorem}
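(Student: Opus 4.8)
The plan is to reduce the equation to the hyperbolic case already settled in Theorem \ref{theorem:hypeq}. By Lemma \ref{a(p)lemma}(2), the coefficient $a(7)$ equals $-1$ in $\F_7$, so the rewriting of $x^2+xy+y^2=1$ established above specializes for $p=7$ to
$$z^2-y^2=1 \quad \text{with} \quad z=x+4y,$$
since $(p+1)/2=4$ when $p=7$. Thus the equation with mixed term is, after a linear change of variables, nothing but the hyperbolic equation whose solutions were already counted.

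First I would observe that the assignment $(x,y)\mapsto(z,y)=(x+4y,\,y)$ is a bijection of $\F_{7^n}\times\F_{7^n}$ onto itself: it is linear with inverse $(z,y)\mapsto(z-4y,\,y)$. Hence it carries the solution set of $x^2+xy+y^2=1$ bijectively onto the solution set of $z^2-y^2=1$, so the two equations have exactly the same number of solutions over $\F_{7^n}$.

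Next I would simply invoke Theorem \ref{theorem:hypeq} with $p=7$, which asserts that $z^2-y^2=1$ has exactly $7^n-1$ solutions over $\F_{7^n}$. Transporting this count back through the bijection yields exactly $7^n-1$ ordered pairs $(x,y)$ solving $x^2+xy+y^2=1$, as claimed.

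There is no substantial obstacle here: the only point that genuinely requires care is confirming that the change of variables is invertible over $\F_{7^n}$, which holds because the linear map $(x,y)\mapsto(x+4y,\,y)$ has determinant $1$ and is therefore a bijection regardless of $n$. Everything else is a direct citation of the hyperbolic count together with the specialization $a(7)=-1$ furnished by the preceding lemma.
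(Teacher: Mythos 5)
Your proposal is correct and follows essentially the same route as the paper: specialize the completion-of-the-square rewriting at $a(7)=-1$ to obtain $z^2-y^2=1$ with $z=x+4y$, note that the substitution is bijective, and cite Theorem \ref{theorem:hypeq} for the count $7^n-1$. Your explicit remark that $(x,y)\mapsto(x+4y,y)$ is invertible is just a slightly more formal phrasing of the paper's observation that each pair $(z,y)$ determines a unique $x$.
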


\begin{proof}
For $p=7$, the coefficient $a(7)=-1$, and hence finding solutions to the equation $x^2+xy+y^2=1$ reduces to finding solutions to the equations
$$z^2-y^2=1 \quad \text{with} \quad z=x+4y.$$

By Theorem \ref{theorem:hypeq}, the equation $z^2-y^2=1$ has exactly $7^n-1$ solutions of ordered pairs $(z,y)$ of elements in $\;\F_{7^n}$. For any choice of $z,y\in\F_{7^n}$, there exists a unique $x\in\F_{7^n}$ such that the equation $z=x+4y$ is satisfied. It follows that the equation $x^2+xy+y^2=1$ has exactly $7^n-1$ solutions of ordered pairs $(x,y)$ of elements in the finite field $\;\F_{7^n}$.
\end{proof}

To obtain results on the number of solutions to the quadratic equation with mixed term for odd primes $p\neq 3,7$ is more subtle. As we shall see we can make progress if the element $a(p)\in \F_p$ is a square in $\F_{p^n}^*$. 

\begin{remark}
By direct computations it can be shown that $a(p)$ is a square in $\F_p^*$ for the primes $p=11,13,23,37,47$ and hence in $\F_{p^n}^*$ for all $n\geq 1$. In fact, these primes are the first five primes known to have this property.
\end{remark}

A family of cases where the element $a(p)\in \F_p$ is a square in $\F_{p^n}^*$ occurs as a corollary to the following general theorem. 

\begin{theorem}\label{theorem_all_squares}
For an odd prime $p\geq 3$ and $n\geq 2$ an even integer, it holds that every element in $\F_{p}^*$ is a square in $\F_{p^n}^*$.
\end{theorem}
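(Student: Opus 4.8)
The plan is to use the standard fact that $\F_{p^n}^*$ is a cyclic group of even order $p^n-1$ (even because $p$ is odd), so that its squares form the unique subgroup of index $2$, characterized by Euler's criterion: an element $c\in\F_{p^n}^*$ is a square if and only if $c^{(p^n-1)/2}=1$. This is exactly the criterion already recorded (for the specific element $a(p)$) in Lemma \ref{a(p)lemma}(3). Since $\F_p^*\subseteq\F_{p^n}^*$, it therefore suffices to show that every $c\in\F_p^*$ satisfies $c^{(p^n-1)/2}=1$.

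Now every $c\in\F_p^*$ satisfies $c^{p-1}=1$, simply because $\F_p^*$ has order $p-1$. Hence $c^{(p^n-1)/2}=1$ will follow at once provided $(p-1)\mid (p^n-1)/2$, and so the whole theorem reduces to the elementary divisibility statement
$$(p-1)\ \big|\ \frac{p^n-1}{2}.$$
To establish this I would factor $p^n-1=(p-1)\,(1+p+p^2+\cdots+p^{n-1})$ and examine the parity of the cofactor $S=1+p+\cdots+p^{n-1}$. Since $p$ is odd, each of the $n$ summands is odd, so $S\equiv n\pmod 2$. Because $n$ is even, $S$ is even, and therefore $\tfrac{p^n-1}{2}=(p-1)\cdot\tfrac{S}{2}$ is an integer multiple of $p-1$. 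This gives the required divisibility, whence $c^{(p^n-1)/2}=\big(c^{p-1}\big)^{S/2}=1$ for every $c\in\F_p^*$, completing the argument.

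The proof is short, and the single place where the hypothesis is genuinely used — hence the only real obstacle — is the parity count: the evenness of $n$ is exactly what forces the geometric-series cofactor $S$ to be even, and this is precisely what fails for odd $n$. An alternative and perhaps more transparent route to the same divisibility is to write $n=2m$ and factor $p^n-1=(p^2-1)\,(1+p^2+\cdots+p^{2(m-1)})=(p-1)(p+1)(1+p^2+\cdots)$; here the even factor $p+1$ exhibits divisibility by $2(p-1)$ directly, without any parity discussion, which I might prefer for exposition.
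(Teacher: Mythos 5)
Your proof is correct. Both you and the paper ultimately rest on the same two ingredients --- the generalized Euler criterion (an element of $\F_{p^n}^*$ is a square iff raising it to the power $(p^n-1)/2$ gives $1$) and Fermat's little theorem ($c^{p-1}=1$ for $c\in\F_p^*$) --- but the routes differ. The paper first reduces to the case $n=2$ by factoring $x^{p^n}-x=(x^{p^2}-x)g(x)$ and identifying $\F_{p^2}$ as a subfield of $\F_{p^n}$, then computes $a^{(p^2-1)/2}=(a^{p-1})^{(p+1)/2}=1$; a square root found in $\F_{p^2}$ is then automatically one in $\F_{p^n}$. You instead work directly in $\F_{p^n}$, reducing everything to the divisibility $(p-1)\mid(p^n-1)/2$, which you get from the parity of the geometric-series cofactor $S=1+p+\cdots+p^{n-1}$ (a sum of $n$ odd terms, hence even precisely when $n$ is). Your argument is more self-contained and avoids the splitting-field machinery entirely; your closing alternative, factoring out $p^2-1=(p-1)(p+1)$, is essentially the paper's exponent computation stripped of the subfield language. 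What the paper's detour buys is the explicit byproduct that $\F_{p^2}$ embeds in $\F_{p^n}$ for even $n$, a fact it reuses later in the proof of Lemma~\ref{Lemma-char2}; your version does not supply that, but it is not needed for the theorem itself.
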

\begin{proof}
The finite field $\F_{p^n}$ is uniquely determined up to isomorphism as the splitting field for the polynomial 
$f_n(x)=x^{p^n}-x$ over the prime field $\F_p$. 

For $n\geq 2$ an even integer and any prime $p\geq 2$, we have
$$p^n-1= (p^2-1)q(p),$$
and
$$x^{p^n}-x=(x^{p^2}-x)g(x),$$
where 
$$q(p)=1+p^2+p^4+p^6+\dots+p^{n-2},$$
and 
$$g(x)=\sum_{k=1}^{q(p)-1}{x^{(p^n-1)-k(p^2-1)}} + 1.$$

The finite field $\F_{p^2}$ is the splitting field for the polynomial $f_2(x)=x^{p^2}-x$ over $\F_p$ and since $f_2(x)$ is a factor in $f_n(x)$, we can therefore identify $\F_{p^2}$ with a subfield of $\F_{p^n}$. 
Therefore we only need to prove the theorem for the case $n=2$.

For an odd prime $p\geq 3$ and an arbitrary element $a\in\F_p^*$ we have the computations
$$a^{(p^2-1)/2}=a^{(p-1)(p+1)/2}=(a^{p-1})^{(p+1)/2}=1,$$
since $a^{p-1}=1$ by Fermat's little theorem, cf. \cite{Davenport}. 

Using the Generalized Euler's Criterion (\cite{powers}, Theorem 2) this shows that $a\in\F_p^*$ is a square in $\F_{p^2}$, and hence in $\F_{p^n}$, for every odd prime 
$p\geq 3$ and every even integer $n\geq 2$.
\end{proof}

\begin{corollary}\label{corollary_a(p)square}
For all primes $p\geq 5$ and $n\geq 2$ an even integer, the element $a(p)\in \F_p$ is a square in $\F_{p^n}^*$.
\end{corollary}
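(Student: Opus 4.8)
The plan is to deduce this directly from Theorem \ref{theorem_all_squares}, which asserts that every element of $\F_p^*$ becomes a square in $\F_{p^n}^*$ as soon as $n\geq 2$ is an even integer. The only thing that must be verified before invoking that theorem is that $a(p)$ actually lies in the \emph{multiplicative} group $\F_p^*$, that is, that $a(p)\neq 0$; the theorem makes no statement about the zero element.

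First I would appeal to Lemma \ref{a(p)lemma}(1), which records that $a(p)=0$ in $\F_p$ precisely when $p=3$. Since the corollary excludes $p=3$ by the hypothesis $p\geq 5$, it follows immediately that $a(p)\neq 0$ for every prime $p\geq 5$, and hence $a(p)\in\F_p^*$. With this membership established, the final step is simply to apply Theorem \ref{theorem_all_squares} to the particular element $a=a(p)$: for every even integer $n\geq 2$, every element of $\F_p^*$ is a square in $\F_{p^n}^*$, so in particular $a(p)$ is a square in $\F_{p^n}^*$, as claimed.

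I do not anticipate a genuine obstacle here, since essentially all of the content is carried by Theorem \ref{theorem_all_squares}. The one point worth flagging is exactly the membership condition $a(p)\in\F_p^*$, which is why the hypothesis $p\geq 5$ rather than $p\geq 3$ is required: for $p=3$ the element $a(3)$ vanishes by Lemma \ref{a(p)lemma}(1), and being a square in the multiplicative group $\F_{p^n}^*$ is then not the relevant notion (this degenerate case is instead handled separately in Theorem \ref{theorem_three}).
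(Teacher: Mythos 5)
Your proposal is correct and follows essentially the same route as the paper: both first use Lemma \ref{a(p)lemma}(1) to conclude that $a(p)\neq 0$, hence $a(p)\in\F_p^*$, for $p\geq 5$, and then apply Theorem \ref{theorem_all_squares}. Your additional remark explaining why the hypothesis is $p\geq 5$ rather than $p\geq 3$ is a helpful clarification but does not change the argument.
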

\begin{proof}
By Lemma \ref{a(p)lemma} the element $a(p)\in \F_p^*$ for $p\neq 3$. Then it follows immediately
by Theorem \ref{theorem_all_squares} that $a(p)$ is a square in $\F_{p^n}^*$ for all primes $p\geq 5$ and 
$n\geq 2$ an even integer.
\end{proof}

Corollary \ref{corollary_a(p)square} gives weight to the following.

\begin{theorem}
Let $n\geq 1$ be an arbitrary integer, and let $p\geq 3$ be a prime for which $a(p)$ is a square in $\F_{p^n}^*$. Then the number of solutions to the equation 
$$x^2+xy+y^2=1$$ 
over the finite field $\;\F_{p^n}$ is given by the formula
$$N_{p^n}=p^n-\sin\big(p^n\frac{\pi}{2}\big).$$
\end{theorem}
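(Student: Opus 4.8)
The plan is to reduce the equation to the circle equation $z^2+w^2=1$ over $\F_{p^n}$, whose solution count was determined in \cite{circle}, and then to recognize that the sine expression is merely a compact encoding of the residue of $p^n$ modulo $4$.

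First I would invoke the completion of the square already carried out above, which shows that $(x,y)$ solves $x^2+xy+y^2=1$ if and only if the pair $(z,y)$ with $z=x+\frac{p+1}{2}\,y$ solves $z^2+a(p)y^2=1$. For each fixed $y$ the assignment $x\mapsto z$ is a bijection of $\F_{p^n}$, so the two solution sets correspond bijectively and it suffices to count the pairs $(z,y)$ with $z^2+a(p)y^2=1$. Next, using the hypothesis that $a(p)$ is a square in $\F_{p^n}^*$, I would fix an element $b\in\F_{p^n}^*$ with $b^2=a(p)$ and substitute $w=by$. Since $b\neq 0$, the map $y\mapsto w$ is again a bijection of $\F_{p^n}$, and under it the equation $z^2+a(p)y^2=1$ becomes exactly $z^2+w^2=1$. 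Hence $N_{p^n}$ equals the number of ordered pairs $(z,w)$ of elements of $\F_{p^n}$ satisfying the circle equation.

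Finally I would appeal to the count of solutions to the circle equation established in \cite{circle}: over $\F_{p^n}$ with $p$ odd this number is $p^n-1$ when $p^n\equiv 1\pmod 4$ and $p^n+1$ when $p^n\equiv 3\pmod 4$ (equivalently, $p^n$ diminished by the value of the quadratic character on $-1$). To match this with the asserted formula, I would note that since $p$ is an odd prime the integer $p^n$ is odd, so $\sin(p^n\pi/2)=1$ precisely when $p^n\equiv 1\pmod 4$ and $\sin(p^n\pi/2)=-1$ precisely when $p^n\equiv 3\pmod 4$. These are exactly the two cases above, whence $N_{p^n}=p^n-\sin(p^n\pi/2)$.

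Each step is individually routine, so the main obstacle is conceptual bookkeeping rather than computation: I must ensure that both substitutions, $x\mapsto z$ and $y\mapsto w$, are genuine bijections of $\F_{p^n}$ so that the solution count is preserved under the reduction to the circle equation, and I must verify that the two-case count from \cite{circle} is correctly identified with the two possible values of $\sin(p^n\pi/2)$ via the residue of $p^n$ modulo $4$. It is worth remarking in passing that this is consistent with the earlier $p=7$ theorem: the hypothesis that $a(7)=-1$ be a square in $\F_{7^n}^*$ forces $7^n\equiv 1\pmod 4$, in which case $\sin(7^n\pi/2)=1$ and the formula returns $7^n-1$, as it must.
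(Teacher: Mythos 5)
Your proposal is correct and follows essentially the same route as the paper: complete the square to reach $z^2+a(p)y^2=1$, absorb the square root of $a(p)$ into the variable to obtain the circle equation, and quote the solution count from \cite{circle}. The only difference is cosmetic — you unpack the $\sin(p^n\pi/2)$ formula into the two residue classes of $p^n$ modulo $4$, whereas the paper cites the formula from (\cite{circle}, Theorem 4.1) verbatim.
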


\begin{proof}
We have reduced the problem of finding the number of solutions to the equation 
$x^2+xy+y^2=1$ over the finite field $\;\F_{p^n}$ to finding the number of solutions to the equations 
$$z^2+a(p)y^2=1 \quad \text{with} \quad z=x+\frac{p+1}{2}\;y,$$ 
where $a(p)=b^2$ for an element $b\in\F_{p^n}^*$.

Now put $u=b\;y$. Then the problem is reduced to finding the number of solutions to the equations
$$z^2+u^2=1 \quad \text{with} \quad z=x+\frac{p+1}{2}\;b^{-1}\;u .$$ 

From (\cite{circle}, Theorem 4.1)we know that the number of ordered pairs $(z,u)$ of elements in $\;\F_{p^n}$ solving the circle equation $z^2+u^2=1$ is given by
$$N_{p^n}=p^n-\sin\big(p^n\frac{\pi}{2}\big).$$
For any choice  $z,u\in\F_{p^n}$, there exists a unique $x\in\F_{p^n}$ such that the equation 
$$z=x+\frac{p+1}{2}\;b^{-1}\;u$$ 
is satisfied. It follows that the equation $x^2+xy+y^2=1$ has exactly $N_{p^n}$
solutions of ordered pairs $(x,y)$ of elements in the finite field $\;\F_{p^n}$.
\end{proof}

\section{The quadratic equation with mixed term in characteristic 2}
In this section we examine the quadratic equation
$$x^2+xy+y^2=1$$
over the finite fields $\;\F_{2^n}$ of characteristic $2$ and degree $n\geq 1$.

We begin by making a general study of the equation 
$$x^2+xy+y^2=c$$ 
for an arbitrary $c\in \F_{2^n}$. 

\begin{theorem}\label{SolutionEquality}
For $c\neq 0$, all the equations $$x^2+xy+y^2=c$$ have the same number of solutions of ordered pairs $(x,y)$ of elements in the finite field $\;\F_{2^n}$.
\end{theorem}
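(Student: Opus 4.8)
The plan is to exploit the homogeneity of the quadratic form $Q(x,y)=x^2+xy+y^2$ together with the special behaviour of squaring in characteristic $2$. Since $Q$ is homogeneous of degree $2$, for any scalar $\lambda\in\F_{2^n}$ one has $Q(\lambda x,\lambda y)=\lambda^2 Q(x,y)$. Hence the scaling map $(x,y)\mapsto(\lambda x,\lambda y)$ carries any solution of $x^2+xy+y^2=c_1$ to a solution of $x^2+xy+y^2=\lambda^2 c_1$. The strategy is to use this to convert any nonzero target value into any other.

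First I would record the key structural fact: in $\F_{2^n}$ the Frobenius map $\lambda\mapsto\lambda^2$ is a field homomorphism, injective and hence (the field being finite) a bijection of $\F_{2^n}$ onto itself. Consequently every element, and in particular every nonzero element, possesses a unique square root. This is precisely the feature of characteristic $2$ that fails for odd primes, where only the quadratic residues are squares and the level sets would split into separate orbits under scaling.

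Next, given two nonzero constants $c_1,c_2\in\F_{2^n}$, I would set $\lambda$ to be the square root of $c_2c_1^{-1}$, which exists and is nonzero because $c_2c_1^{-1}\neq 0$. The scaling map $(x,y)\mapsto(\lambda x,\lambda y)$ then sends the solution set of $x^2+xy+y^2=c_1$ into the solution set of $x^2+xy+y^2=c_2$; since $\lambda\neq 0$ this map is a bijection of $\F_{2^n}\times\F_{2^n}$ with inverse given by scaling by $\lambda^{-1}$, and the inverse carries solutions of the $c_2$-equation back to solutions of the $c_1$-equation. Restricting to the two solution sets therefore yields a bijection between them, so they have equal cardinality; as $c_1$ and $c_2$ were arbitrary nonzero elements, all the equations $x^2+xy+y^2=c$ with $c\neq 0$ share the same number of solutions.

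I expect no genuine analytic obstacle here: the entire content of the argument is the observation that squaring is surjective in characteristic $2$, which is exactly what allows a single scaling to rescale the value of the homogeneous form to any prescribed nonzero target. The only point requiring a moment's care is verifying that the chosen $\lambda$ is nonzero, so that the scaling map is genuinely invertible, and this is immediate from $c_1,c_2\neq 0$.
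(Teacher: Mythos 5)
Your argument is correct and is essentially the paper's own proof: both rest on the homogeneity of $x^2+xy+y^2$ under the scaling $(x,y)\mapsto(\lambda x,\lambda y)$ together with the fact that the squaring map on $\F_{2^n}$ is a bijection, so every nonzero constant is a square and a suitable scaling identifies the solution sets. The only cosmetic difference is that the paper scales from $c=1$ to an arbitrary $c=d^2$, while you pass directly between two arbitrary nonzero constants $c_1$ and $c_2$.
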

\begin{proof}
For any element $c=d^2\in \F_{2^n}^*$, multiplication by $c=d^2$ defines an isomorphism of $\F_{2^n}^*$ mapping $1\in\F_{2^n}^*$ into $c\in \F_{2^n}^*$. By the similar isomorphism defined by multiplication by $d$, the set of solutions to the equation $x^2+xy+y^2=1$ is mapped bijectively onto the set of solutions to the 
equation $x^2+xy+y^2=c$. Hence the equations for $c=d^2\in \F_{2^n}^*$ all have the same number of solutions. Since the squaring homomorphism $x^2:\F_{2^n}\rightarrow\F_{2^n}$ is an isomorphism (\cite{circle}, proof of Theorem 2.1), every element $c\in \F_{2^n}^*$ is in fact a square $c=d^2$. This proves that all the equations $x^2+xy+y^2=c$ for $c\in \F_{2^n}^*$ have the same number of solutions.
\end{proof}

Making use of Theorem \ref{SolutionEquality}, we can determine the exact number of solutions to the equation for all $c\in\F_{2^n}$, if we can determine it for $c=0$.

Over $\F_{2^n}$ the equation $$x^2+xy+y^2=0$$ is equivalent to the 
equation $$(x+y)^2=xy.$$  

By introducing the extra variable $u\in\F_{2^n}$, we can rewrite this equation as the system of equations
\[
\begin{split}
x+y  &= u,\\
xy &= u^2.
\end{split}
\]

(a) If $u=0$, we first get $y=x$ and then $x^2=0$. Since 
$x^2:\F_{2^n}\rightarrow\F_{2^n}$ is an isomorphism, it follows that $x=y=0$, giving the solution $(x,y)=(0,0)$.
\vspace{1mm}

(b) If $u\neq 0$, we can rewrite the system of equations to be solved to  
\[
\begin{split}
u^{-1}x+u^{-1}y&=1,\\
u^{-1}x\cdot u^{-1}y&=1.
\end{split}
\]

Put $\bar x = u^{-1}x$ and $\bar y = u^{-1}y$. Then the system takes the form
\[
\begin{split}
\bar x + \bar y &= 1,\\
\bar x \bar y &=1.
\end{split}
\]

By a final rewriting, we first get
\[
\bar y = \bar x^{-1},
\]
and then 
\[
\bar x + \bar x^{-1}=1,
\]
which in $\F_{2^n}$ is equivalent to the equation in one variable
$$\bar x^2 + \bar x + 1 = 0.$$ 

The number of solutions to this equation depends on the parity of the degree $n\geq 2$ of the field. 

\begin{lemma}\label{Lemma-char2}
Consider the equation $$\bar x^2+\bar x +1=0$$ over the finite field $\F_{2^n}$.
\begin{enumerate}
\item For $n\geq 2$ an even number, the equation has two solutions.
\item For $n\geq 3$ an odd number, the equation has no solutions.
\end{enumerate}
\end{lemma}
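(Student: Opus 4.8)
The plan is to recognize the polynomial $\bar x^2 + \bar x + 1$ as the obstruction to the existence of primitive cube roots of unity. In characteristic $2$ one has the factorization
$$\bar x^3 + 1 = (\bar x + 1)(\bar x^2 + \bar x + 1),$$
so the roots of $\bar x^2 + \bar x + 1 = 0$ in $\F_{2^n}$ are precisely the elements $\alpha \in \F_{2^n}$ with $\alpha^3 = 1$ and $\alpha \neq 1$, that is, the primitive cube roots of unity. The problem thus reduces to deciding when such elements lie in $\F_{2^n}$ and counting them.

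Next I would invoke that $\F_{2^n}^*$ is a cyclic group of order $2^n - 1$. A cyclic group contains an element of order $3$ if and only if $3$ divides its order, and in that case it contains exactly two such elements, namely the two generators of its unique subgroup of order $3$. Hence $\bar x^2 + \bar x + 1 = 0$ has a root in $\F_{2^n}$ if and only if $3 \mid 2^n - 1$, and whenever it has one root it has exactly two.

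It then remains to settle the parity condition. Writing $2 \equiv -1 \pmod 3$ gives $2^n - 1 \equiv (-1)^n - 1 \pmod 3$, so $3 \mid 2^n - 1$ precisely when $n$ is even. This yields both assertions: for even $n \geq 2$ there are two solutions, and for odd $n$ there are none. I should also confirm that the two roots are genuinely distinct; this is immediate since the formal derivative of $\bar x^2 + \bar x + 1$ equals $1$, so the polynomial is separable. Equivalently, by Vieta the two roots sum to $1$, so if $\alpha$ is a root then the other is $\alpha + 1 \neq \alpha$.

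I do not expect a serious obstacle here: the only point requiring care is the clean reduction to cube roots of unity, after which everything is a short cyclic-group computation. As an alternative route I note that one could instead rewrite the equation as $\bar x^2 + \bar x = 1$ and apply Artin--Schreier theory, whereby a solution exists in $\F_{2^n}$ if and only if the absolute trace $\mathrm{Tr}_{\F_{2^n}/\F_2}(1) = n \bmod 2$ vanishes, recovering the same parity dichotomy.
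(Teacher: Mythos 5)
Your proof is correct, but it takes a genuinely different route from the paper. You identify the roots of $\bar x^2+\bar x+1$ with the primitive cube roots of unity via the factorization $\bar x^3+1=(\bar x+1)(\bar x^2+\bar x+1)$, and then reduce both parts of the lemma to the single question of whether $3$ divides the order $2^n-1$ of the cyclic group $\F_{2^n}^*$, settled by $2\equiv -1\pmod 3$. The paper instead argues the two cases separately: for even $n$ it verifies by direct computation that the equation has two roots in $\F_{2^2}$ and then embeds $\F_{2^2}$ as a subfield of $\F_{2^n}$; for odd $n$ it represents $\F_{2^n}$ as $\F_2[t]/(\mathrm{Irr}(t))$ and appeals to a consideration of polynomial degrees. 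Your argument is more uniform and arguably more robust --- the paper's degree argument for the odd case is delicate, since squaring an element of large degree in $t$ involves reduction modulo $\mathrm{Irr}(t)$, whereas your divisibility criterion is unconditional; your version also dovetails with the paper's later Lemma on $2^n \bmod 3$, which records exactly the congruence you use. What the paper's approach buys is concreteness (explicit roots in $\F_4$) and independence from the cyclicity of $\F_{2^n}^*$, though the latter is standard. Your remarks on separability and the Artin--Schreier trace criterion are correct but not needed beyond confirming the two roots are distinct.
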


\begin{proof}
(1) It can easily be checked by direct computation, that the equation has two solutions in $\F_{2^2}$, both lying outside the prime field. This implies that it also has two solutions over any finite field $\F_{2^n}$ of even degree $n\geq 2$, since we know from the proof of Theorem \ref{theorem_all_squares}, that for all even integers $n\geq 2$, the field $\F_{2^2}$ is isomorphic to a subfield of $\F_{2^n}$.

(2) Represent the field $\F_{2^n}$ as the quotient field $\F_2[t]/({\rm Irr}(t))$ of the polynomial ring $\F_2[t]$ modulo an irreducible polynomial ${\rm Irr}(t)$ of degree $n$, cf. \cite{lang}. An arbitrary element in $\F_{2^n}$ then has the form  $\bar x=a_0+a_1t+\dots +a_{n-1}t^{n-1}$ for $a_0, a_1, \dots,a_{n-1}\in \F_2$. Since $n\geq 3$ is an odd number, it follows by consideration of degrees of polynomials that no element $\bar x\in \F_{2^n}$ can solve the equation $\bar x^2+\bar x +1=0$.
\end{proof}
\vspace{2mm}

Collecting facts we get the following result on the number of solutions to the quadratic equation with mixed term in characteristic 2.

\begin{theorem}\label{Theorem-char2}
The quadratic equation
$$x^2+xy+y^2=1$$
has exactly $$2^n +(-1)^{n-1}$$ solutions of ordered pairs $(x,y)$ of elements in the finite field $\;\F_{2^n}$ of characteristic $2$ and degree $n\geq 1$.
\end{theorem}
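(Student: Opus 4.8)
The plan is to avoid counting the solutions to $x^2+xy+y^2=1$ directly, and instead to bootstrap from the single value $c=0$, which the preceding discussion has already reduced to Lemma \ref{Lemma-char2}. The quadratic form $q(x,y)=x^2+xy+y^2$ partitions the $2^{2n}$ ordered pairs $(x,y)\in\F_{2^n}\times\F_{2^n}$ according to the value $q(x,y)\in\F_{2^n}$. Writing $N_0$ for the number of pairs with $q(x,y)=0$ and $N$ for the common number of pairs with $q(x,y)=c$ for any fixed $c\neq 0$ --- common by Theorem \ref{SolutionEquality} --- the $2^n-1$ nonzero values of $c$ together with the single value $0$ account for all pairs, giving the identity
$$N_0+(2^n-1)\,N=2^{2n}.$$
Since $N$ is exactly the quantity we seek, it suffices to compute $N_0$ and solve for $N$.

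First I would compute $N_0$ from the two cases already set up. Case (a) contributes the single solution $(0,0)$. In case (b) the substitution $\bar x=u^{-1}x$, $\bar y=u^{-1}y$ sets up a bijection between pairs $(x,y)$ with $x+y=u\neq 0$ solving $q(x,y)=0$ and pairs $(u,\bar x)$ with $u\in\F_{2^n}^*$ and $\bar x$ a root of $\bar x^2+\bar x+1=0$; indeed $\bar x=x/(x+y)$ is recovered from $(x,y)$, and $\bar y=\bar x^{-1}$ is then forced. Hence case (b) contributes $(2^n-1)$ times the number of roots of $\bar x^2+\bar x+1$ in $\F_{2^n}$. By Lemma \ref{Lemma-char2} this number of roots is $2$ when $n$ is even and $0$ when $n\geq 3$ is odd; for $n=1$ the polynomial $\bar x^2+\bar x+1$ is the (unique) irreducible quadratic over $\F_2$ and so has no root, so $n=1$ behaves like the odd case. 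Collecting, $N_0=1+2(2^n-1)=2^{n+1}-1$ for $n$ even, and $N_0=1$ for $n$ odd.

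It then remains to substitute into the counting identity and simplify. For $n$ even one finds $(2^n-1)N=2^{2n}-2^{n+1}+1=(2^n-1)^2$, whence $N=2^n-1$; for $n$ odd one finds $(2^n-1)N=2^{2n}-1=(2^n-1)(2^n+1)$, whence $N=2^n+1$. Both cases are captured by the single expression $N=2^n+(-1)^{n-1}$, since $(-1)^{n-1}=-1$ for $n$ even and $(-1)^{n-1}=+1$ for $n$ odd, which is the claimed formula.

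I expect the only genuinely delicate point to be the bookkeeping in the count of $N_0$ --- specifically, verifying that case (b) really contributes a factor $2^n-1$ coming from the free choice of $u\in\F_{2^n}^*$ multiplied by the root count of $\bar x^2+\bar x+1$, and that distinct pairs $(u,\bar x)$ yield distinct solutions $(x,y)$. The parity split is then dictated entirely by Lemma \ref{Lemma-char2}, with the small field $\F_2$ requiring the separate (but immediate) observation that $\bar x^2+\bar x+1$ is irreducible there. Once $N_0$ is pinned down, the passage to $N$ is the purely algebraic step of solving the linear counting identity, and the factorizations $(2^n-1)^2$ and $(2^n-1)(2^n+1)$ make the division by $2^n-1$ clean.
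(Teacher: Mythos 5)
Your proposal is correct and follows essentially the same route as the paper: reduce to counting $N_0$ for $c=0$ via the substitution $\bar x=u^{-1}x$, $\bar y=u^{-1}y$ and Lemma \ref{Lemma-char2}, then recover $N$ from the identity $N_0+(2^n-1)N=2^{2n}$ using Theorem \ref{SolutionEquality}. The only (harmless) difference is that the paper disposes of $n=1$ by listing the three solutions directly, whereas you fold it into the odd case by noting that $\bar x^2+\bar x+1$ is irreducible over $\F_2$.
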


\begin{proof}
We divide the proof into three cases.

(a) For $n=1$, there are three solutions, namely $(x,y)=(1,0),(1,1),(0,1)$, in accordance with the formula $2^n +(-1)^{n-1}$.
\vspace{.5mm}

(b) For $n\geq 2$ even, there are two solutions to the equation $\bar x^2+\bar x +1=0$ for each $u\in\F_{2^n}^*$ by Lemma \ref{Lemma-char2}. 
Since $u\in\F_{2^n}^*$ can assume $2^n-1$ values, we get in this way $2\cdot (2^n-1)$ non-trivial solutions to the equation $x^2+xy+y^2=0$. 
In addition we also have the trivial solution $(x,y)=(0,0)$, so that altogether there are $2\cdot (2^n-1)+1$ solutions to the equation $x^2+xy+y^2=0$.

All of the $2^n-1$ equations $x^2+xy+y^2=c$ for $c\neq 0$, have the same number of solutions by Theorem \ref{SolutionEquality}. Therefore  
the number of solutions to the equation $x^2+xy+y^2=1$ for $n\geq 2$ even is given by
$$
\frac{2^n\cdot 2^n - [2\cdot (2^n-1)+1]}{2^n -1} = 2^n-1 = 2^n +(-1)^{n-1}.
$$

(c) For $n\geq 3$ odd, it follows by Lemma \ref{Lemma-char2}, that there is only one solution to the equation $x^2+xy+y^2=0$, namely the trivial solution $(x,y)=(0,0)$.

Since again all of the $2^n-1$ equations $x^2+xy+y^2=c$ for $c\neq 0$, have the same number of solutions by Theorem \ref{SolutionEquality}, it follows that 
the number of solutions to the equation $x^2+xy+y^2=1$ for $n\geq 3$ odd is given by
$$
\frac{2^n\cdot 2^n - 1}{2^n -1} = 2^n+1= 2^n +(-1)^{n-1}.
$$
\end{proof}

We finish this section with an application of Theorem \ref{Theorem-char2} to study the number of solutions to the the quadratic equation
$x^2+xy+y^2=1$ over the finite field $\;\F_{2^n}$ as a function of $n\geq 1$. We need the following Lemma on prime powers of $2$.

\begin{lemma}\label{lemma:char2_mod3}
For any prime power $2^n$, $n\geq 1$, it holds that $2^n\equiv 1 \pmod 3$ for $n\geq 2$ even, and $2^n\equiv 2 \pmod 3$ for $n\geq 1$ odd. 
\end{lemma}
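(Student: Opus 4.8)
The plan is to reduce the entire statement to the single congruence $2 \equiv -1 \pmod 3$, from which both cases follow at once by tracking the parity of the exponent. The point is that one need not treat "even" and "odd" as genuinely separate computations; they are the two branches of one uniform formula.

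First I would observe that $2 = 3 - 1$, so $2 \equiv -1 \pmod 3$. Since congruence modulo a fixed integer is compatible with multiplication, raising both sides to the $n$-th power yields
$$2^n \equiv (-1)^n \pmod 3$$
for every integer $n \geq 1$. This is the key identity, and it already encodes the oscillation described in the lemma.

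Next I would split on the parity of $n$. For $n \geq 2$ even we have $(-1)^n = 1$, hence $2^n \equiv 1 \pmod 3$; for $n \geq 1$ odd we have $(-1)^n = -1$, and since $-1 \equiv 2 \pmod 3$ we conclude $2^n \equiv 2 \pmod 3$. This settles both assertions of the lemma. As an equivalent route, one could instead argue by induction on $n$, using the base cases $2^1 \equiv 2$ and $2^2 \equiv 1 \pmod 3$ together with the step $2^{n+1} = 2 \cdot 2^n$, which interchanges the residues $1$ and $2$ at each stage; but the closed-form reduction above is cleaner.

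Honestly, there is no real obstacle here: the statement is elementary, and the only thing to be careful about is phrasing the parity split so that it covers exactly the ranges $n \geq 2$ (even) and $n \geq 1$ (odd) asserted in the lemma, with no gap or overlap. The multiplicativity of congruence does all the work.
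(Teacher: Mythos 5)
Your proof is correct, and it takes a genuinely slicker route than the paper's. You reduce everything to the single congruence $2\equiv -1\pmod 3$ and then read off $2^n\equiv(-1)^n\pmod 3$, so the two cases of the lemma are just the two values of $(-1)^n$. The paper instead first observes that $2^m$ is never divisible by $3$, so $2^m\equiv 1$ or $2\pmod 3$; for $n=2m$ even it checks both cases ($1^2\equiv 1$ and $2^2\equiv 4\equiv 1$) to conclude $2^n\equiv 1$, and for $n=2m+1$ odd it writes $2^n=2\cdot 2^{2m}\equiv 2\cdot 1=2$. That argument works but carries an unnecessary case split on the residue of $2^m$, and as written its odd case only explicitly covers $n\geq 3$ (the paper takes $m\geq 1$), leaving $n=1$ to the trivial check $2^1=2$. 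Your version handles all $n\geq 1$ uniformly and makes the oscillation of the residues transparent; the paper's version has the mild virtue of using nothing beyond the fact that squaring either nonzero residue mod $3$ gives $1$, but there is no real trade-off here --- both are elementary and complete.
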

\begin{proof}
A prime power $2^m$, $m\geq 1$, can never be divisible by the prime $3$, and hence $2^m\equiv 1,2 \pmod 3$.

If $n\geq 2$ is even, we can write $n=2m$, $m\geq 1$. If $2^m\equiv 1 \pmod 3$, then also $2^n=2^m\cdot 2^m\equiv 1 \pmod 3$. If $2^m\equiv 2 \pmod 3$, it follows likewise that $2^n=2^m\cdot 2^m\equiv 4\equiv 1 \pmod 3$. Hence $2^n\equiv 1 \pmod 3$ for $n\geq 2$ even.

If $n\geq 3$ is odd, we can write  $n=2m+1$, $m\geq 1$. Then it follows that $2^n=2\cdot 2^{2m}\equiv 2\cdot 1=2 \pmod 3$. 
\end{proof}

\begin{theorem}
Over a finite field of characteristic $2$, the number of solutions to the equation 
$$x^2+xy+y^2=1$$
grows in multiples of $3$ as a function of the degree of the extension.
\end{theorem}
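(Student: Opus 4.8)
The plan is to read the statement as the assertion that the number of solutions over $\F_{2^n}$ is divisible by $3$ for every degree $n\geq 1$, so that as $n$ increases the count runs through a sequence of multiples of $3$. First I would invoke Theorem \ref{Theorem-char2}, which supplies the closed form
$$N_{2^n}=2^n+(-1)^{n-1}$$
for the number of ordered pairs $(x,y)\in\F_{2^n}$ solving $x^2+xy+y^2=1$. The entire problem then collapses to the purely arithmetic claim that $N_{2^n}\equiv 0\pmod 3$ for all $n\geq 1$.

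Second, I would feed the residues of $2^n$ provided by Lemma \ref{lemma:char2_mod3} into this formula, splitting on the parity of $n$. For even $n\geq 2$ the lemma gives $2^n\equiv 1\pmod 3$ while $(-1)^{n-1}=-1$, so $N_{2^n}\equiv 1-1\equiv 0\pmod 3$. For odd $n\geq 1$ the lemma gives $2^n\equiv 2\pmod 3$ while $(-1)^{n-1}=1$, so $N_{2^n}\equiv 2+1\equiv 0\pmod 3$. In either case the count is a multiple of $3$, which is the assertion. (One could also bypass the case split by noting $2\equiv -1\pmod 3$, whence $2^n\equiv (-1)^n$ and $N_{2^n}\equiv (-1)^n+(-1)^{n-1}\equiv 0\pmod 3$; I would keep the parity version since it makes direct use of the lemma just established.)

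I expect no genuine obstacle here: once the explicit formula of Theorem \ref{Theorem-char2} and the residues of Lemma \ref{lemma:char2_mod3} are in hand, the conclusion is a two-line congruence. The only point deserving care is pinning down what "grows in multiples of $3$" is meant to record, and I would resolve this at the outset by interpreting it as divisibility of each term $N_{2^n}$ by $3$. To reinforce the reading I would exhibit the opening values $N_{2^1}=3$, $N_{2^2}=3$, $N_{2^3}=9$, $N_{2^4}=15$, $N_{2^5}=33$, which visibly pass through successive multiples of $3$.
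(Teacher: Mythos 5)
Your proof is correct, and it rests on the same two ingredients as the paper's --- the closed formula $N_{2^n}=2^n+(-1)^{n-1}$ from Theorem \ref{Theorem-char2} and the residues of $2^n$ modulo $3$ from Lemma \ref{lemma:char2_mod3} --- but it resolves the ambiguous phrase ``grows in multiples of $3$'' differently. You prove that each value $N_{2^n}$ is itself divisible by $3$; the paper instead computes the increment
$$N_{2^{n+1}}-N_{2^n}=2^n-2\cdot(-1)^{n-1}$$
and shows that each \emph{difference} is divisible by $3$. The two readings are logically equivalent here, since your version gives the paper's by subtraction, and the paper's gives yours by induction from the base value $N_{2^1}=3$; your congruence is arguably the cleaner statement, and your list of initial values $3,3,9,15,33$ confirms both at a glance. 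Your parity split is carried out correctly ($2^n\equiv 1$, $(-1)^{n-1}=-1$ for $n$ even; $2^n\equiv 2$, $(-1)^{n-1}=1$ for $n$ odd), and the remark that $2\equiv -1\pmod 3$ collapses the whole argument to one line. The only thing I would add is an explicit sentence acknowledging the interpretive choice and noting that it implies the increment formulation as well, so that a reader expecting the ``successive differences'' reading is not left wondering.
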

\begin{proof}
By Theorem \ref{Theorem-char2}, the number $N_{2^n}$ of solutions to $x^2+xy+y^2=1$ over the finite field $\F_{2^n}$, $n\geq 1$, 
is given by $N_{2^n}=2^n +(-1)^{n-1}$.

Then we have the following computation.
\[
\begin{split}
N_{2^{n+1}}-N_{2^n} &= 2^{n+1}-2^n + (-1)^n - (-1)^{n-1}\\
& = 2^n - 2\cdot (-1)^{n-1} .
\end{split}
\]
Making use of Lemma \ref{lemma:char2_mod3}, it is now easy to prove that $$N_{2^{n+1}}-N_{2^n}\equiv 0 \pmod 3 ,$$ 
and thereby completing the proof of the theorem.
\end{proof}

\bibliographystyle{plain}

\begin{thebibliography}{1}
\bibitem{powers} {Andreas {Aabrandt} and Vagn {Lundsgaard Hansen}}. {A Note on Powers in Finite Fields}. Internat. J. Math. Ed. Sci. Tech. 47(2016), No. 6, 987--991.

\bibitem{circle} {Andreas {Aabrandt} and Vagn {Lundsgaard Hansen}}. {The Circle Equation over Finite Fields}. Quaest. Math. (to appear).

\bibitem{Davenport} Harold {Davenport}. {The Higher Arithmetic. An Introduction to the Theory of Numbers}. {Dover Publications, Inc., New York}, {1983}.

\bibitem{Hansen98} Vagn {Lundsgaard Hansen}. {Shadows of the Circle}. {World Scientific}, Singapore, 1998.

\bibitem{lang} Serge {Lang}. {Algebra}. {Springer}, Reading, Massachusetts, 2005.

\bibitem{weil49} {Andr\'e {Weil}}. {Numbers of solutions of equations in finite fields}. {Bull. Amer. Math. Soc.}, 55:497-508, 1949.



\end{thebibliography}

\end{document}